\DeclareMathOperator{\des}{des}
\DeclareMathOperator{\Des}{Des}
\DeclareMathOperator{\ides}{ides}
\DeclareMathOperator{\simp}{simp}
\DeclareMathOperator{\Simp}{Simp}
\DeclareMathOperator{\Av}{Av}
\begin{document}

\newtheorem{thm}{Theorem}[section]
\newtheorem{prop}[thm]{Proposition}
\newtheorem{formula}[thm]{Formula}
\newtheorem{lem}[thm]{Lemma}
\newtheorem{cla}[thm]{Claim}
\newtheorem{con}[thm]{Conjecture}
\newtheorem{cor}[thm]{Corollary}
\newtheorem{prb}[thm]{Problem}
\newtheorem{que}[thm]{Question}
\theoremstyle{definition}
\newtheorem{obs}[thm]{Observation}
\newtheorem{pres}[thm]{Presentation}
\newtheorem{fac}[thm]{Fact}
\newtheorem{rem}[thm]{Remark}
\newtheorem{exa}[thm]{Example}
\newtheorem{defn}[thm]{Definition}
\newtheorem{df}[thm]{Definition}
\newtheorem{alg}[thm]{Algorithm}


\newcommand{\sumlim}{\sum\limits}
\newcommand{\be}{\beta}
\newcommand{\dep}{{\rm dp}}
\newcommand{\Neg}{{\rm Neg}}
\newcommand{\nneg}{{\rm neg}}
\newcommand{\inv}{{\rm inv}}
\newcommand{\sgn}{{\rm sign}}

\title[On two-sided gamma-positivity for simple permutations]
{On two-sided gamma-positivity for simple permutations\\}


\author{Ron M.\ Adin}
\address{Department of Mathematics, Bar-Ilan University, Ramat-Gan 52900, Israel}
\email{radin@math.biu.ac.il}
\author{Eli Bagno}
\address{Jerusalem College of Technology, 21 Havaad Haleumi St., Jerusalem, Israel}
\email{bagnoe@g.jct.ac.il}
\author{Estrella Eisenberg}
\address{Jerusalem College of Technology, 21 Havaad Haleumi St., Jerusalem, Israel}
\email{estchoc@gmail.com}
\author{Shulamit Reches}
\address{Jerusalem College of Technology, 21 Havaad Haleumi St., Jerusalem, Israel}
\email{shulamit.reches@gmail.com}
\author{Moriah Sigron}
\address{Jerusalem College of Technology, 21 Havaad Haleumi St., Jerusalem, Israel}
\email{moria.sigron@mail.huji.ac.il}

\date{April 22, 2018}

\keywords{Eulerian polynomial, gamma positivity, simple permutation, valley hopping.}

\begin{abstract}

Gessel conjectured that the two-sided Eulerian polynomial, recording the common distribution of the descent number of a permutation and that of its inverse, has non-negative integer coefficients when expanded in terms of the gamma basis. This conjecture has been proved recently by Lin. 

We conjecture that an analogous statement holds for simple permutations, and use the substitution decomposition tree of a permutation (by repeated inflation) to show that this would imply the Gessel-Lin result. We provide supporting evidence for this stronger conjecture.



\end{abstract}


\maketitle

\section{Introduction}
\label{sec:intro}

Eulerian numbers enumerate permutations according to their descent numbers.  The  {\em two-sided Eulerian numbers}, studied by  Carlitz, Roselle, and Scoville \cite{CRS} constitute a natural generalization. These numbers count permutations according to their number of descents as well as the number of descents of the inverse permutation. 
%

Explicitly, 
the {\em descent set} of a permutation $\pi \in S_n$ is defined as:
\[
\Des(\pi) = \{i \in [n-1] \mid \pi(i)>\pi(i+1)\}.
\]
Denote $\des(\pi) = |\Des(\pi)|$ and $\ides(\pi) = \des(\pi^{-1})$, the {\em descent numbers} of $\pi$ and $\pi^{-1}$. 
For example, if $\pi= 2 4 6 1 3 5$ then $\Des(\pi)=\{3\}$, $\des(\pi)=1$, $\Des(\pi^{-1})=\{1,3,5\}$ and $\ides(\pi)=3$. 


A polynomial $f(q)$ is {\em palindromic} if its coefficients are the same when read from left to right as from right to left. 
Explicitly, if $f(q)=a_rq^r+a_{r+1}q^{r+1} +\cdots +a_sq^s$ with $a_r, a_s \ne 0$ and $r \le s$, then we require $a_{r+i}=a_{s-i}$ $(\forall i)$; equivalently,
$f(q) = q^{r+s} f(1/q)$.
Following Zeilberger \cite{Z}, we define the {\it darga} of $f(q)$ as above to be $r+s$; the zero polynomial is considered to be palindromic of each nonnegative darga. 
The set of palindromic polynomials of darga $n-1$ is a vector space of dimension $\lfloor (n+1)/2 \rfloor$, with {\em gamma basis}
\[
\{q^j (1+q)^{n-1-2j} \mid 0 \le j \leq \lfloor (n-1)/2 \rfloor \}.
\]
The (one-sided) {\em Eulerian polynomial} 
\[
A_n(q) = \sum_{\pi \in S_n} q^{\des(\pi)}
\]
is palindromic of darga $n-1$, and thus there are real numbers $\gamma_{n,j}$ such that 
\[
A_n(q) = \sum_{0 \leq j \leq \lfloor (n-1)/2 \rfloor} \gamma_{n,j} q^j(1+q)^{n-1-2j}.
\]
See \cite[pp.\ 72, 78]{Pet Book} for details. 
Foata and Sch\"utzenberger \cite{F.S} proved that the coefficients $\gamma_{n,j}$ are actually non-negative integers.
The result of Foata and Sch\"utzenberger was reproved combinatorially, using an action of the group $\mathbb{Z}_2^n$ on $S_n$ which leads to an interpretation of each coefficient $\gamma_{n,j}$ as the number of orbits of a certain type. This method, called ``valley hopping", is described in \cite{F.St, Branden}. A nice exposition appears in \cite{P}.

\medskip

Now let $A_n(s,t)$ be the {\em two-sided Eulerian polynomial}
\[
A_n(s,t) = \sum_{\pi \in S_n} s^{\des(\pi)} t^{\ides(\pi)}.
\]
%
It is well known (see, e.g., \cite[p.\ 167]{P}) that the bivariate polynomial $A_n(s,t)$ satisfies
\begin{equation}\label{eq.pal}
A_n(s,t) = (st)^{n-1} A_n(1/s, 1/t)
\end{equation}
as well as
\begin{equation}\label{eq.inv}
A_n(s,t) = A_n(t,s).
\end{equation}
In fact, (\ref{eq.pal}) follows from the bijection from $S_n$ onto itself taking a permutation to its reverse, while (\ref{eq.inv}) follows from the bijection taking each permutation to its inverse.

A bivariate polynomial satisfying Equations (\ref{eq.pal}) and (\ref{eq.inv}) will be called (bivariate) {\em palindromic of darga} $n-1$.
Note that if we arrange the coefficients of a bivariate palindromic polynomial in a matrix, then this matrix is symmetric with respect to both diagonals. 

\begin{exa}
The two-sided Eulerian polynomial for $S_4$ is:
\[
A_{4}(s,t)=1+10st+10(st)^2+(st)^3+st^2+s^2t.
\]
Its matrix of coefficients is 
\[ 
\left( \begin{array}{cccc}
1 & 0 & 0 & 0\\
0 & 10 & 1 & 0 \\
0 & 1 & 10 & 0 \\
0 & 0 & 0 & 1 
\end{array} \right),
\] 
and is clearly symmetric with respect to both diagonals.
\end{exa}

It can be proved (see \cite[p.\ 78]{Pet Book}) that the set of bivariate palindromic polynomials of darga $n-1$ is a vector space of dimension $\lfloor (n+1)/2 \rfloor \cdot \lfloor (n+2)/2 \rfloor$, with {\em bivariate gamma basis}
\[
\{(st)^i(s+t)^j(1+st)^{n-1-j-2i} \mid i,j \ge 0,\, 2i+j \le n-1 \}.
\]
A bivariate palindromic polynomial is called {\em gamma-positive} if all the coefficients in its expression in terms of the bivariate gamma basis are nonnegative.
Gessel (see \cite[Conjecture 10.2]{Branden}) conjectured that the two-sided Eulerian polynomial $A_n(s,t)$ is gamma-positive. 
This has recently been proved by Lin \cite{Lin}.
Explicitly:
\begin{thm}\label{t:Gessel} 
{\rm (Gessel's conjecture, Lin's theorem)}
For each $n \geq 1$ there exist nonnegative integers $\gamma_{n,i,j}$ $(i,j \ge 0,\, 2i+j \leq n-1)$ such that
\[
A_n(s,t)=\sum\limits_{i,j}{\gamma_{n,i,j}(st)^i(s+t)^j(1+st)^{n-1-j-2i}}.
\]
\end{thm}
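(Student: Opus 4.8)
The plan is to upgrade the ``valley hopping'' proof of the one-sided result (Foata--Sch\"utzenberger, reproved via the Modified Foata--Strehl action of $\mathbb{Z}_2^{\,n}$ on $S_n$) so that it keeps track of the second statistic $\ides$ simultaneously. Recall that in the one-sided setting this action partitions $S_n$ into orbits, each orbit $O$ possessing a unique representative with no double descents, and one has
\[
\sum_{\pi\in O} s^{\des\pi}=s^{v(O)}(1+s)^{n-1-2v(O)},
\]
where $v(O)$ is the number of valleys of that representative; summing over orbits yields the gamma expansion of $A_n(s)$, the orbit counts giving the $\gamma_{n,j}$. My first step would be to compute, for a single orbit, the joint generating function $\sum_{\pi\in O}s^{\des\pi}t^{\ides\pi}$ and to check that it equals a single bivariate gamma basis element $(st)^i(s+t)^j(1+st)^{n-1-j-2i}$ times a nonnegative integer.

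The crux is the behaviour of $\ides(\pi)=\des(\pi^{-1})$ under the action. Hopping a ``free'' value $x$ (one sitting at a double ascent or double descent) across the maximal arch of larger values relocates the letter $x$ inside $\pi^{-1}$, and hence can only toggle a descent of $\pi^{-1}$ at one of the two positions neighbouring $x$. I would prove a lemma that sorts the free coordinates of each orbit into those whose hop toggles a descent of $\pi^{-1}$ in step with the descent it toggles in $\pi$, and those that toggle them oppositely: the former should contribute factors of shape $1+st$ (the move either creates a descent in both $\pi$ and $\pi^{-1}$ or in neither), while the latter contribute factors of shape $s+t$ (the move creates a descent in exactly one of the two). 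The valleys and peaks, which the action fixes, should then account for the forced factor $(st)^i$. Making this bookkeeping exact---reading off $i$, $j$, and the residual power of $1+st$ intrinsically from the orbit---is the main computation, and the shape of the basis is strong evidence that it comes out cleanly.

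I expect the principal obstacle to be that the value-based action controlling $\des$ and the position-based action controlling $\ides$ are not manifestly compatible: a single hop can alter both statistics at once, so the naive product action $\mathbb{Z}_2^{\,n}\times\mathbb{Z}_2^{\,n}$ overcounts and fails to respect inverses. Overcoming this would likely require restricting to a carefully chosen subgroup, or fixing a priority order on the coordinates and only hopping when the move is ``safe'' for the inverse descent set, and then verifying that the resulting partition of $S_n$ is well defined, that each orbit's joint generating function is a single basis element, and that the orbits exhaust $S_n$. As a fallback I would instead set up an insertion recursion for $A_n(s,t)$, expand it in the bivariate gamma basis with undetermined coefficients, and try to prove nonnegativity by induction; the difficulty there is that insertion mixes basis elements, so positivity is not preserved term by term and a more delicate inductive invariant would be needed.
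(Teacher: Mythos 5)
Your plan founders at its very first step, and the failure is concrete. The Modified Foata--Strehl (valley-hopping) orbits do not have joint $(\des,\ides)$ generating functions equal to nonnegative multiples of single bivariate gamma basis elements; indeed they are not even symmetric in $s$ and $t$. Take $n=4$ and $\pi=2413$ (with the usual boundary convention $\pi(0)=\pi(n+1)=0$): the only free value is $2$, so the orbit is $\{2413,\,4213\}$. Since $2413^{-1}=3142$ and $4213^{-1}=3241$, the orbit sum is $st^2+s^2t^2=st^2(1+s)$, which violates the required symmetry $A(s,t)=A(t,s)$ and hence cannot be written as $\gamma\,(st)^i(s+t)^j(1+st)^{3-j-2i}$ with $\gamma \ge 0$. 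The same example refutes your key lemma: this hop changes $(\des,\ides)$ from $(1,2)$ to $(2,2)$, i.e.\ it changes $\des$ by $+1$ while leaving $\ides$ fixed, a case covered by neither your ``in-step'' toggles (factor $1+st$) nor your ``opposite'' toggles (factor $s+t$). The underlying reason is that hopping $x$ does not merely relocate the letter $x$ inside $\pi^{-1}$: it shifts the positions of all jumped-over values, so $\pi^{-1}$ changes at many coordinates at once, and no subgroup restriction or hopping priority is known that repairs this. The paper states this obstruction explicitly: unlike the univariate case, no combinatorial proof of Gessel's conjecture is known.

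For comparison, the paper does not prove Theorem~\ref{t:Gessel} by any action on $S_n$ in one-line notation; it invokes Lin's proof, which is algebraic and proceeds via Visontai's recurrence for the coefficients $\gamma_{n,i,j}$. The paper's own combinatorial machinery goes through substitution decomposition trees instead: since inflation is additive in both $\des$ and $\ides$ (Observation~\ref{respect des}), involutions acting on tree labels (switching $12 \leftrightarrow 21$ along binary right chains, and $2413 \leftrightarrow 3142$ at length-$4$ nodes) change $(\des,\ides)$ in a controlled way, by $(\pm 1,\pm 1)$ or $(\pm 1,\mp 1)$, exactly producing factors $1+st$ and $s+t$. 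This yields an unconditional combinatorial proof for $H(5)\cap S_n$ (Theorem~\ref{main theorem}) and reduces the full theorem to gamma-positivity of $\simp_n(s,t)$ (Conjecture~\ref{conj:simple}), which remains open. If you want a combinatorial route to the two-sided statement, that tree-based framework --- not valley hopping --- is the one the paper proposes, and even there the general case is still a conjecture.
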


An explicit recurrence for the coefficients $\gamma_{n,i,j}$ was described by Visontai~\cite{V}. This recurrence does not directly imply the positivity of the coefficients, but Lin~\cite{Lin} managed to use it to eventually prove Gessel's conjecture. Unlike the univariate case, no combinatorial proof of Gessel's conjecture is known.

\medskip

Simple permutations (for their definition see Section~\ref{sec:simple permutations}) serve as building blocks of all permutations. We propose here a strengthening of Gessel's conjecture, for the class of simple permutations.
\begin{con}
\label{conj:simple}
For each positive $n$, the bivariate polynomial
\[
\simp_{n}(s,t) = \sum_{\sigma \in \Simp_n} s^{\des(\sigma)} t^{\ides(\sigma)}
\]
is gamma-positive, where $\Simp_n$ is the set of simple permutations of length $n$.
\end{con}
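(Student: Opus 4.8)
The plan is to separate the two requirements hidden in the phrase \emph{gamma-positive}: first that $\simp_n(s,t)$ is bivariate palindromic of darga $n-1$, so that its expansion in the bivariate gamma basis exists, and only then that the resulting coefficients are nonnegative. For palindromicity I would record that $\Simp_n$ is invariant under the dihedral symmetries of the permutation matrix; in particular it is closed under the inverse map $\sigma\mapsto\sigma^{-1}$ and under the reverse map $\sigma\mapsto\sigma^{r}$, since none of these operations creates or destroys an interval. The inverse map interchanges $\des$ and $\ides$, while the reverse map sends $(\des(\sigma),\ides(\sigma))$ to $(n-1-\des(\sigma),\,n-1-\ides(\sigma))$, exactly as on all of $S_n$. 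Reindexing the sum of the weights $s^{\des}t^{\ides}$ over $\Simp_n$ along these two bijections therefore yields the analogues of (\ref{eq.inv}) and (\ref{eq.pal}) for $\simp_n(s,t)$, so $\simp_n(s,t)$ is bivariate palindromic of darga $n-1$ and expands uniquely in the bivariate gamma basis with real coefficients $\gamma^{\simp}_{n,i,j}$. It remains to show $\gamma^{\simp}_{n,i,j}\ge 0$.

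\textbf{The additive structure of inflation.} The engine I would use is the behaviour of $(\des,\ides)$ under substitution. If $\pi=\sigma[\alpha_1,\dots,\alpha_m]$ is the inflation of $\sigma$ by blocks $\alpha_1,\dots,\alpha_m$, then the junction between two consecutive blocks is a descent precisely when the corresponding step of $\sigma$ is a descent, and the analogous statement holds for $\pi^{-1}$; hence
\[
\des(\pi)=\des(\sigma)+\sum_{i}\des(\alpha_i), \qquad \ides(\pi)=\ides(\sigma)+\sum_{i}\ides(\alpha_i).
\]
Thus the weight $s^{\des}t^{\ides}$ is fully multiplicative along the substitution decomposition tree. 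Feeding this into the Albert--Atkinson decomposition (a simple skeleton of size $\ge 4$, or a sum/skew-sum of indecomposables) produces a functional equation relating the weighted generating function $F(z)=\sum_n A_n(s,t)z^n$ to $\widehat S(x)=\sum_{m\ge 4}\simp_m(s,t)\,x^m$, of the schematic shape $F=z+\widehat S(F)+(\text{sum part})+(\text{skew part})$.

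\textbf{Positivity and the main obstacle.} Since products of bivariate gamma basis elements are again basis elements (the exponents simply add, and darga is additive under inflation), this multiplicativity is exactly the mechanism by which Conjecture \ref{conj:simple} would imply Theorem \ref{t:Gessel}. For the converse direction needed here I would solve the functional equation for its top coefficient and run a strong induction on $n$: assuming Lin's theorem for $A_n$ and gamma-positivity of every $\simp_m$ with $m<n$, isolate $\simp_n(s,t)$. The obstacle, and the reason the statement remains conjectural, is that this inversion is inherently \emph{signed} — the sum- and skew-parts subtract off the imprimitive permutations, so the nonnegativity of $\gamma^{\simp}_{n,i,j}$ is not visible term by term. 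The clean way to defeat the signs would be a combinatorial model defined \emph{internally} on $\Simp_n$ that simultaneously controls $\des$ and $\ides$: a $\mathbb{Z}_2^{n}$-type valley-hopping action or a sign-reversing involution. Here two difficulties compound. The modified Foata--Strehl action does not preserve simplicity, since hopping a letter across a valley typically creates an interval; and even on all of $S_n$ no action is known that tracks both statistics at once, which is precisely why Lin's proof of Theorem \ref{t:Gessel} is algebraic rather than bijective. Producing such an action — or, alternatively, extracting cancellation-free positivity from Visontai's recurrence adapted to the simple skeleton — is the step I expect to be the crux.
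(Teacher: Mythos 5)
You have not proved the statement, but neither does the paper: this is Conjecture~\ref{conj:simple}, which the paper leaves open, supporting it only with the explicit gamma-positive expansion of $\simp_6(s,t)$, computer verification for all $n \le 12$, and the reduction (Section~\ref{sec:reduction}) showing that the conjecture \emph{implies} the Gessel--Lin theorem. Measured against that, your proposal is accurate as far as it goes, and you are right to stop where you stop. The palindromicity half is exactly the paper's own justification at the start of Section~\ref{sec:reduction}: $\Simp_n$ is closed under inverse and under reverse/complement (these maps send intervals to intervals, hence preserve simplicity), the inverse swaps $\des$ and $\ides$, and the reverse replaces $(\des,\ides)$ by $(n-1-\des,\,n-1-\ides)$, giving the two identities (\ref{eq.inv}) and (\ref{eq.pal}) for $\simp_n(s,t)$. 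So the real gamma coefficients exist, and only their nonnegativity is at issue.

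Your diagnosis of why nonnegativity is the hard part also matches the paper. Multiplicativity of $s^{\des}t^{\ides}$ under inflation (Observation~\ref{respect des}) together with the Albert--Atkinson decomposition yields, in Section~\ref{sec:generating function}, precisely the signed inversion you predict: $\simp_n(s,t) = -f_n^{\langle -1 \rangle}(s,t) + (-1)^{n-1} + (-st)^{n-1}$ for $n \ge 4$, where the $f_n^{\langle -1 \rangle}$ are coefficients of the functional inverse of the two-sided Eulerian generating function --- a formula in which positivity is invisible term by term. Likewise, the paper's valley-hopping-style involutions $\phi_i$ and $\psi_j$ of Section~\ref{sec:partial settlement} act only on the substitution decomposition tree, i.e.\ on how simple permutations are \emph{composed}, and say nothing about the internal structure of $\Simp_n$ for $n \ge 6$; that is exactly the missing ``action defined internally on $\Simp_n$'' that you identify as the crux (and, as you note, hopping typically destroys simplicity by creating an interval). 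One caution about your inductive scheme: deducing gamma-positivity of $\simp_n$ from Lin's theorem would reverse the implication the paper cares about --- the paper's point is that the conjecture is a \emph{strengthening} of Gessel--Lin, so even granting Lin's theorem, the signed inversion gives you no leverage, as you yourself concede. In short: no error, but no proof --- and none exists in the paper.
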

Using the substitution decomposition tree of a permutation (by repeated inflation), we show how this cojecture implies the Gessel-Lin result. 
A combinatorial proof of the conjecture will give a combinatorial proof of the Gessel-Lin result.
We also provide supporting evidence for this stronger conjecture.


\medskip

The rest of the paper is organized as follows.
Section \ref{sec:simple permutations} contains background material concerning simple permutations, inflation, and the substitution decomposition tree of a permutation. 
In Section \ref{sec:partial settlement} we introduce combinatorial involutions on the tree, and use them to give a combinatorial proof of Gessel's conjecture for a certain class of permutations, $H(5) \cap S_n$. 
In Section~\ref{sec:reduction}
we show how, more generally, Lin's theorem (Gessel's conjecture) follows combinatorially from Conjecture \ref{conj:simple}. 
Finally, in Section \ref{sec:generating function}, we give a formula for $\simp_{n}(s,t)$ which may have independent value.

\section{Simple permutations and inflation}\label{sec:simple permutations}

We start by presenting some preliminaries concerning simple permutations, inflation and the substitution decomposition tree. Original papers will be mentioned occasionally, but terminology and notation will follow (with a few convenient exceptions) the recent survey \cite{Vatter}.

\begin{defn}
Let $\pi=a_1 \ldots a_n \in S_n$.   
A {\em block} (or {\em interval}) of $\pi$ is a nonempty contiguous
sequence of entries $a_i a_{i+1} \ldots a_{i+k}$ whose values also form a contiguous sequence of integers.
\end{defn}

\begin{exa}
If $\pi = 2647513$ then $6475$ is a block but $64751$ is not. 
\end{exa}

Each permutation can be decomposed into singleton blocks, and also forms a single block by itself; these are the {\em trivial blocks} of the permutation. All other blocks are called {\em proper}.

\begin{defn}
A permutation 
is {\em simple} if it has no proper blocks. 
\end{defn}

\begin{exa}\label{ex:simple up to 5}
The permutation $3517246$ is simple. 
\end{exa}

The simple permutations of length $n \le 2$ are $1$, $12$ and $21$.
There are no simple permutations of length $n=3$. 
Those of length $n=4$ are $2413$ and its inverse (which is also its reverse).
For length $n=5$ they are $24153$, $41352$, their reverses and their inverses (altogether $6$ permutations). 

\begin{defn}
A {\em block decomposition} of a permutation is a partition of it into disjoint blocks. 
\end{defn}

For example, the permutation $\sigma=67183524$  can be decomposed as $67\ 1\ 8\ 3524$. 
In this example, the relative order between the blocks forms the permutation $3142$, i.e., if we take for each block one of its digits as a representative then the set of representatives is order-isomorphic to $3142$. 
Moreover, the block $67$ is order-isomorphic to $12$, and the block $3524$ is order-isomorphic to $2413$. These are instances of the concept of {\em inflation}, defined as follows.

\begin{defn}
Let $n_1, \ldots, n_k$ be positive integers with $n_1 + \ldots + n_k = n$.
The {\em inflation} of a permutation $\pi \in S_k$ by permutations $\alpha_i \in S_{n_i}$ $(1 \leq i \leq k)$ is 
the permutation $\pi[\alpha_1, \ldots, \alpha_k] \in S_n$ obtained by replacing the $i$-th entry of $\pi$ by a block which is order-isomorphic to the permutation $\alpha_i$
on the numbers $\{s_i + 1, \ldots, s_i + n_i\}$ instead of $\{1, \ldots, n_i\}$, where $s_i = n_1 + \ldots + n_{i-1}$ $(1 \leq i \leq k)$. 
\end{defn}

\begin{exa}
The inflation of $2413$ by $213,21,132$ and $1$ is 
\[
2413[213,21,132,1]=546 \ 98 \ 132 \ 7.
\]
\end{exa}

A very important fact is that inflation is additive on both $\des$ and $\ides$. 

\begin{obs}\label{respect des}
Let $\sigma=\pi[\alpha_1,\dots,\alpha_k]$. Then
\[
\des(\sigma)=\des(\pi)+\sum_{i=1}^n{\des(\alpha_i)}
\]
and
\[
\ides(\sigma)=\ides(\pi)+\sum_{i=1}^n{\ides(\alpha_i)}.
\]
\end{obs}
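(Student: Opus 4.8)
The plan is to prove both identities by classifying the relevant descents according to the block structure of the inflation: I would treat $\des$ directly, and then deduce the $\ides$ statement from it using the way inflation interacts with inversion. Throughout, write $\sigma=\pi[\alpha_1,\dots,\alpha_k]$ with $\pi\in S_k$ and $\alpha_i\in S_{n_i}$, and recall that, read from left to right, $\sigma$ is a concatenation of $k$ consecutive blocks $B_1,\dots,B_k$, where $B_i$ occupies the $i$-th run of positions and is order-isomorphic to $\alpha_i$.

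For the descent statement I would split the positions $p\in[n-1]$ that may carry a descent into those \emph{internal} to a single block $B_i$ and those at a \emph{boundary} between $B_i$ and $B_{i+1}$. Since $B_i$ is order-isomorphic to $\alpha_i$, its internal descents are in bijection with $\Des(\alpha_i)$, contributing $\sum_{i=1}^k\des(\alpha_i)$. For the boundaries, the key point is that in an inflation each $B_i$ receives a \emph{contiguous} range of values, and these ranges are stacked according to $\pi$; hence every entry of $B_i$ is smaller than every entry of $B_{i+1}$ exactly when $\pi(i)<\pi(i+1)$. Thus the boundary between $B_i$ and $B_{i+1}$ is a descent iff $\pi(i)>\pi(i+1)$, i.e. iff $i\in\Des(\pi)$, contributing $\des(\pi)$. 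Adding the two contributions gives $\des(\sigma)=\des(\pi)+\sum_i\des(\alpha_i)$.

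For the $\ides$ statement, rather than re-running the analysis on $\sigma^{-1}$ (whose block structure is less transparent), I would first establish the inversion identity
\[
\left(\pi[\alpha_1,\dots,\alpha_k]\right)^{-1}
=\pi^{-1}\!\left[\alpha_{\pi^{-1}(1)}^{-1},\dots,\alpha_{\pi^{-1}(k)}^{-1}\right],
\]
which says that the inverse of an inflation is again an inflation, of the inverted inner permutations reindexed by $\pi^{-1}$. This is checked from the explicit description of $\sigma$ (a position $p$ lying in $B_i$ at local position $q$ is sent to the value $q$ plus the offset $\sum_{\pi(j)<\pi(i)}n_j$), and can be verified on the running example $2413[213,21,132,1]$. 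Granting it, I apply the descent formula just proved to the right-hand side, together with $\des(\sigma^{-1})=\ides(\sigma)$, $\des(\pi^{-1})=\ides(\pi)$, and $\des(\alpha^{-1})=\ides(\alpha)$, to get
\[
\ides(\sigma)=\des(\sigma^{-1})=\ides(\pi)+\sum_{m=1}^k\ides(\alpha_{\pi^{-1}(m)}).
\]
As $m$ ranges over $[k]$ the index $\pi^{-1}(m)$ ranges over $[k]$ as well, so the last sum equals $\sum_{i=1}^k\ides(\alpha_i)$, which is the claim.

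The descent bookkeeping is routine; the one step that deserves care is the inversion identity, since the reordering of the inner permutations by $\pi^{-1}$ and the simultaneous inversion of each $\alpha_i$ must be tracked precisely. The saving grace is that this reindexing is immaterial to the final count: a sum over all blocks is invariant under permuting them, so once the identity is in hand the $\ides$ statement collapses onto the $\des$ statement.
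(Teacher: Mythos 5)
The paper offers no proof of this statement at all: it is stated as an \emph{Observation}, treated as self-evident, so there is no argument of the paper's to compare yours against. Your proposal is a correct and complete proof. The block/boundary classification for $\des$ is exactly the right bookkeeping: internal descents of the block $B_i$ biject with $\Des(\alpha_i)$, and the boundary between $B_i$ and $B_{i+1}$ is a descent precisely when $i\in\Des(\pi)$, because the value ranges of the blocks are contiguous and stacked according to $\pi$. Your route to the $\ides$ statement via the inversion identity
\[
\left(\pi[\alpha_1,\dots,\alpha_k]\right)^{-1}
=\pi^{-1}\!\left[\alpha_{\pi^{-1}(1)}^{-1},\dots,\alpha_{\pi^{-1}(k)}^{-1}\right]
\]
is clean and correct (it checks out on the paper's example $2413[213,21,132,1]$), and the observation that the reindexing by $\pi^{-1}$ is immaterial to the sum is exactly what makes it collapse onto the $\des$ case; an equally viable direct alternative would be to count, in $\sigma$ itself, the values $v$ such that $v+1$ precedes $v$, splitting according to whether $v$ and $v+1$ lie in the same block. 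One small slip in your parenthetical: the position at local index $q$ of $B_i$ is sent to the value $\alpha_i(q)$ plus the offset $\sum_{\pi(j)<\pi(i)}n_j$, not to $q$ plus that offset; this does not affect the argument. (Note also that the paper's statement has a typo: the sums should run to $k$, not $n$, as your proof implicitly assumes.)
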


Two special cases of inflation, deserving special attention, are the {\em direct sum} and {\em skew sum} operations, defined as follows.

\begin{defn}

Let $\pi \in S_m$ and $\sigma \in S_n$. 
The {\em direct sum} of $\pi$ and $\sigma$ is the permutation $\pi \oplus \sigma \in S_{m+n}$ defined by
\[
(\pi \oplus \sigma)_i =
\begin{cases} 
\pi_i, & \text{if } i \leq m; \\
\sigma_{i-m}+m, & \text{if } i>m,
\end{cases}
\]
and their {\em skew sum} is the permutation $\pi \ominus \sigma \in S_{m+n}$ defined by
\[
(\pi \ominus \sigma)_i = 
\begin{cases} 
\pi_{i}+n, & \text{if } i \leq m; \\
\sigma_{i-m},  & \text{if } i>m.
\end{cases}
\]
\end{defn}

\begin{exa}
If $\pi=132$ and $\sigma=4231$ then $\pi \oplus \sigma=1327564$ and $\pi \ominus \sigma=5764231$
\end{exa}

Note that $\pi \oplus \sigma = 12[\pi,\sigma]$ and $\pi \ominus \sigma = 21[\pi,\sigma]$. 

\begin{defn}\label{sum-indecomposable}
A permutation is {\em sum-indecomposable} (respectively, {\em skew-indecomposable}) if it cannot be written as a direct (respectively, skew) sum. 
\end{defn}

The following proposition shows that every permutation has a canonical representation as an inflation of a simple permutation. 

\begin{prop}\label{t.substitution}
\cite[Theorem 1]{AAK}\cite[Proposition 3.10]{Vatter}
Let $\sigma \in S_n$ $(n \geq 2)$. Then there exist a unique integer $k \ge 2$, a unique simple permutation $\pi \in S_k$, and a sequence
of permutations $\alpha_1,\ldots,\alpha_k$ such that 
\[
\sigma = \pi[\alpha_1,\dots,\alpha_k].
\]

If $\pi \notin \{12,21\}$ then $\alpha_1, \ldots,\alpha_k$ are also unique.  

If $\pi=12$ $(\pi=21)$ then $\alpha_1, \alpha_2$ are unique as long as we require, in addition, that $\alpha_2$ is sum-indecomposable (respectively, skew-indecomposable). 
\end{prop}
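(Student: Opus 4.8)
The plan is to base everything on a single combinatorial lemma about how blocks interact, and then split the argument according to whether $\sigma$ is sum-decomposable, skew-decomposable, or neither (these three cases being mutually exclusive for $n\ge 2$). First I would prove the \emph{overlapping-blocks lemma}: if $B_1,B_2$ are blocks of $\sigma$ sharing at least one position but with neither containing the other, then $B_1\cup B_2$, $B_1\cap B_2$, $B_1\setminus B_2$ and $B_2\setminus B_1$ are all blocks. The position-sets are automatically intervals, so the content is about values; a common position forces a common value, so the value-intervals $V_1,V_2$ overlap, and a short inclusion--exclusion count (using $|B_i|=|V_i|$ for a block together with $|B_1\cup B_2|=|V_1\cup V_2|$) pins down $|B_1\cap B_2|=|V_1\cap V_2|$ and forces each of the four sets to be a block. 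I would also record the routine fact that, for any inflation $\sigma=\pi[\alpha_1,\dots,\alpha_k]$ with parts $P_1,\dots,P_k$ (the inflated blocks, which are consecutive in position), a subset $Q\subseteq[k]$ is a block of $\pi$ if and only if $\bigcup_{j\in Q}P_j$ is a block of $\sigma$.

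For existence, if $\sigma$ is sum-decomposable I take $\pi=12$, $k=2$: writing $\sigma$ as its unique maximal direct sum $\gamma_1\oplus\cdots\oplus\gamma_r$ of sum-indecomposable components, I set $\alpha_2=\gamma_r$ and $\alpha_1=\gamma_1\oplus\cdots\oplus\gamma_{r-1}$, so $\alpha_2$ is sum-indecomposable; the skew case is symmetric with $\pi=21$. When $\sigma$ is neither sum- nor skew-decomposable I form the \emph{canonical partition}, whose parts are the maximal (under inclusion) proper blocks together with the singletons lying in no proper block. By the overlapping-blocks lemma two maximal proper blocks cannot overlap: their union would be a block, hence by maximality equal to $[n]$, which would exhibit $\sigma$ as an inflation of a length-$3$ permutation; but every length-$3$ permutation is sum- or skew-decomposable, contradicting our case. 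So the parts are disjoint and cover $[n]$, and the quotient $\pi$ on the $k$ parts is simple, since a proper block $Q$ of $\pi$ would lift to a proper block $\bigcup_{j\in Q}P_j$ strictly containing a part, contradicting maximality. Finally $k\ne 2$ (that returns to the sum/skew cases) and $k\ne 3$ (no simple permutation of length $3$), so $k\ge 4$.

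For uniqueness the crux is to show that any representation $\sigma=\pi[\alpha_1,\dots,\alpha_k]$ with $\pi$ simple and $k\ge 4$ must use exactly the canonical partition. Given a block $B$ meeting at least two parts, consecutiveness of the parts forces the parts it meets to form an interval $Q\subseteq[k]$ with all interior parts fully contained; applying the overlapping-blocks lemma at the (at most two) partially met ends shows the saturation $\bigcup_{j\in Q}P_j$ is again a block, so $Q$ is a block of $\pi$, and simplicity forces $Q=[k]$. To finish I would invoke the \emph{corner lemma}: a simple permutation of length $\ge 4$ has $\pi(1),\pi(k)\notin\{1,k\}$, so the parts carrying the smallest and largest values sit strictly between the ends and are therefore fully contained in any block that spans all parts; such a block then contains both value $1$ and value $n$ and, being an interval of values, must be all of $[n]$. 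Hence no \emph{proper} block meets two parts, which makes each part of size $\ge 2$ a maximal proper block and each singleton part free; this identifies the canonical partition and determines $\pi$ and the $\alpha_i$. The same statement shows a sum- or skew-decomposable $\sigma$ admits no length-$\ge 4$ simple quotient (its decomposing prefix block would contain value $1$, which lies in a non-end part, and so would meet two parts), forcing $\pi\in\{12,21\}$ there; uniqueness of $\alpha_1,\alpha_2$ under the indecomposability hypothesis then follows from uniqueness of the maximal direct (resp.\ skew) sum decomposition.

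The main obstacle is the uniqueness direction, and within it the passage from ``a block meets several parts'' to ``it is a union of whole parts, hence trivial'': this is precisely where the overlapping-blocks lemma (to saturate partial ends) and the corner lemma (to collapse a block spanning all parts) must be combined with care. The existence direction and the $12/21$ bookkeeping are comparatively routine once the overlapping-blocks lemma is in hand.
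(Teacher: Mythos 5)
The paper does not actually prove this proposition: it is imported from the literature, with citations to \cite[Theorem 1]{AAK} and \cite[Proposition 3.10]{Vatter}, so there is no in-paper argument to compare yours against. Your proposal is correct, and it is essentially a reconstruction of the standard proof given in those references: the overlapping-blocks (interval) lemma, the partition into maximal proper blocks in the neither-sum-nor-skew-decomposable case, the simplicity of the resulting quotient, and the ``corner'' observation that a simple $\pi$ of length $k \ge 4$ has $\pi^{-1}(1), \pi^{-1}(k) \notin \{1,k\}$ --- which is exactly what collapses any proper block meeting two parts, and what forces $\pi \in \{12,21\}$ in the decomposable cases. Two places need one extra line each when written out in full. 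First, in proving the quotient simple, the lifted block $\bigcup_{j \in Q} P_j$ may strictly contain a \emph{singleton} part rather than a maximal proper block; the contradiction there is with ``lies in no proper block,'' not with maximality. Second, in ruling out a simple quotient of length $\ge 4$ for a sum-decomposable $\sigma$, the decomposing prefix block can be a singleton (when $\sigma(1)=1$), in which case it does not ``meet two parts''; instead one concludes directly that $P_1$ carries the value $1$, i.e.\ $\pi(1)=1$, contradicting the corner lemma. Both are trivial patches, and with them your outline compiles into a complete proof.
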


\begin{exa}
The permutation $\sigma=452398167$ can be written as an inflation of the simple permutation $2413$:
\[
\sigma=2413[3412, 21, 1, 12].
\]
\end{exa}

\begin{rem} \label{right chain condition in permutations}
The additional requirements for $\pi = 12$ and $\pi = 21$ are needed for uniqueness of the expression. 
To see that, note that the permutation $123$ can be written as $12[12,1]=12 \ 3$ but also as $12[1,12]=1 \ 23$. The first expression is the one preferred above (with $\alpha_2$ sum-indecomposable). 
\end{rem}

One can continue the process of decomposition by inflation for the constituent permutations $\alpha_i$, recursively, until all the resulting permutations have length $1$. 
In the example above, $3412$ can be further decomposed as $3412=21[12,12]$, so that
\[
\sigma= 2413[21[12,12],21,1,12]
\]
and, eventually,
\[
\sigma= 2413[21[12[1,1],12[1,1]],21[1,1],1,12[1,1]].
\]
%
%

This information can be encoded by a tree, as follows.
\begin{defn}\label{tree of perm}
Represent each permutation $\sigma$ by a corresponding {\em substitution decomposition tree} $T_{\sigma}$, recursively, as follows. 
\begin{itemize}
\item
If $\sigma = 1 \in S_1$, represent it by a tree with one node.
\item
Otherwise, write $\sigma=\pi[\alpha_1,\ldots,\alpha_k]$ as in Proposition~\ref{t.substitution}, and represent $\sigma$ by a tree with a root node, labeled $\pi$, having $k$ ordered children corresponding to $\alpha_1,\ldots,\alpha_k$. Replace each child $\alpha_i$ by the corresponding tree $T_{\alpha_i}$. 
\end{itemize}
\end{defn}



\begin{figure}
\begin{center}
\includegraphics[scale=1]{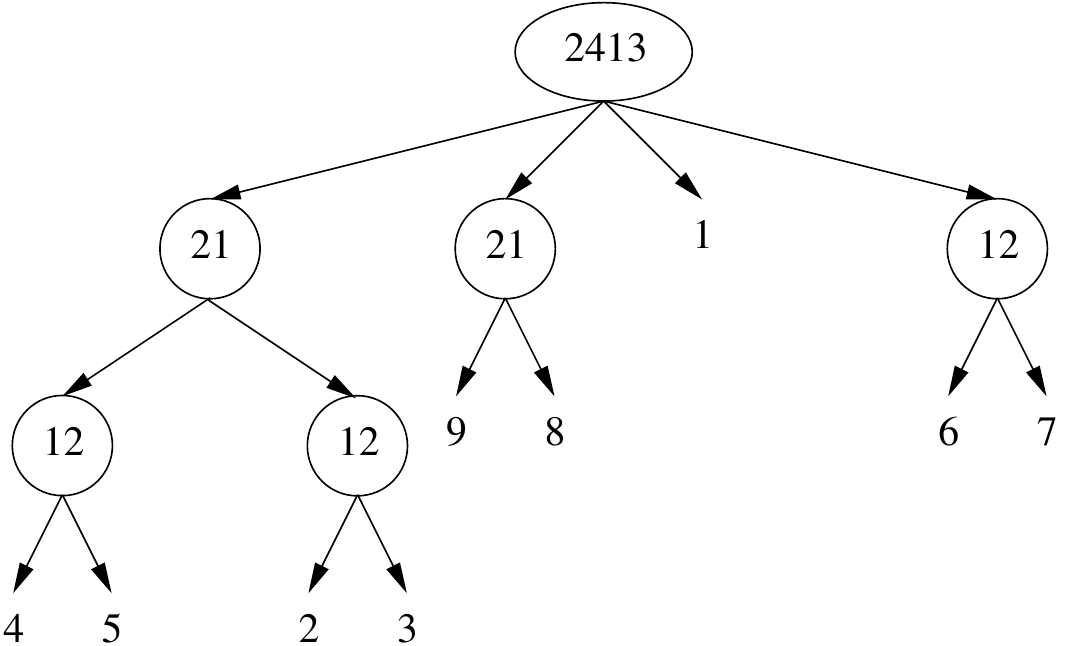}
\caption{The tree $T_\sigma$ for $\sigma=452398167$}
\label{fig:one tree}
\end{center}
\end{figure}

\begin{exa}
Figure \ref{fig:one tree} depicts the substitution decomposition tree $T_{\sigma}$ for $\sigma=452398167$. For clarity, the leaves are labeled by the corresponding values of the permutation $\sigma$, instead of simply $1$.
\end{exa}












Inflation can be extended to
sets of permutations (an operation called {\em wreath product} in \cite{A.S}).

\begin{defn}
Let $\mathcal{A}$ and $\mathcal{B}$ be sets of permutations. 
Define
\[
\mathcal{A}[\mathcal{B}] 
= \{\alpha[\beta_1,\ldots,\beta_k] \mid \alpha \in \mathcal{A},\, \beta_1,\ldots,\beta_k \in \mathcal{B} \}.
\]
\end{defn}

\begin{exa}
Let $A=\{12\}$ and $B=\{21,132\}$. 
Then 
\[
A[B]
= \{2143,21354,13254,132465\}.
\]
\end{exa}

\begin{defn}
A set $C$ of permutations is 
{\em substitution-closed} if $C = C[C]$.
The {\em substitution closure} $\langle C \rangle$ 
of a set $C$ of permutations is the smallest 
substitution-closed
set of permutations which contains $C$. 
\end{defn}

The inflation
operation is associative. Defining $C_1 = C$ and 
$C_{n+1} = C[C_n]$, we clearly have 
\[
\langle C \rangle = \bigcup_{n=1}^{\infty} C_n.
\]



\begin{defn}\label{def:Simp}
For a positive integer $n$, 
let $\Simp_n$ ($\Simp_{\leq n}$) be the set of all simple permutations of length $n$ (respectively, of length at most $n$).
Let $H(n)=\langle \Simp_{\leq n} \rangle$, the substitution closure of $\Simp_{\leq n}$.
\end{defn}

\begin{exa}
$H(2) = \langle \Simp_{\le 2} \rangle = \langle \{1,12,21\} \rangle$ 
is the set of all permutations that can be obtained from the trivial permutation $1$ by direct sums and skew sums. These are exactly the {\em separable permutations}, counted by the {\em large Schr\"oder numbers;} see \cite{W}. 
Separable permutations can also be described via pattern avoidance, namely 
\[
H(2)=\Av(3142,2413).
\]
For more details see \cite[following Proposition 3.2]{B.H.V.}. 
\end{exa}

\section{Gamma-positivity for $H(5)$}
\label{sec:partial settlement}

In this section we present a combinatorial proof of Gessel's conjecture (Lin's theorem) for the subset $H(5) \cap S_n$ of $S_n$ (for any positive $n$).

Fu, Lin and Zeng \cite{F.L.Z.} proved the following (univariate) gamma-positivity result.

\begin{prop} 
For each $n$ there exist nonnegative integers $\gamma_{n,k}$ $(0 \leq k \leq \lfloor (n-1)/2 \rfloor)$ such that
\[
\sum_{\pi \in H(2)\cap S_n}{t^{\des(\pi)} = \sum_{k=0}^{\lfloor (n-1)/2 \rfloor}} { \gamma_{n,k} t^k (1+t)^{n-1-2k}}
\]
\end{prop}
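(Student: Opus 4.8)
The plan is to prove the equivalent statement (equivalent given palindromicity) that the \emph{transformed} polynomial $f_n(t)/(1+t)^{n-1}$, re-expressed in the variable $z = t/(1+t)^2$, has nonnegative coefficients; these coefficients are exactly the $\gamma_{n,k}$. First I would record that $f_n(t) := \sum_{\pi \in H(2)\cap S_n} t^{\des(\pi)}$ is palindromic of darga $n-1$: the reverse map $\pi \mapsto \pi^r$ preserves $H(2)$ (reversal turns $\oplus$ into $\ominus$ and vice versa) and sends $\des(\pi)$ to $n-1-\des(\pi)$, giving $f_n(t) = t^{n-1} f_n(1/t)$. Consequently $f_n$ lies in the span of the gamma basis $\{t^k(1+t)^{n-1-2k}\}$, the $\gamma_{n,k}$ are integers (the basis is unitriangular over $\mathbb{Z}$), and since $t^k(1+t)^{n-1-2k} = (1+t)^{n-1} z^k$, gamma-positivity is equivalent to nonnegativity of the coefficients of $f_n(t)/(1+t)^{n-1}$ as a polynomial in $z$.

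Next I would set up the generating function $F(x,t) = \sum_{n\ge 1} f_n(t)\, x^n$ and exploit the recursive structure of separable permutations. Writing $P$ and $M$ for the generating functions of the sum-indecomposable and skew-indecomposable separable permutations respectively, Proposition~\ref{t.substitution} and the additivity of $\des$ under inflation (Observation~\ref{respect des}) --- together with the fact that a $\oplus$-junction is an ascent while a $\ominus$-junction is a descent --- yield
\[
F = \frac{P}{1-P}, \qquad P = x + \frac{tM^2}{1-tM}, \qquad M = x + \frac{P^2}{1-P}.
\]
Eliminating $P$ and $M$ is a short computation and gives the single cubic
\[
F - tF^3 = x\bigl(1 + (1+t)F + tF^2\bigr),
\]
which one can check against $f_1 = 1$, $f_2 = 1+t$, $f_3 = 1 + 4t + t^2$.

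The key step is the change of variables. Substituting $x \mapsto x/(1+t)$ and $F \mapsto \hat F/(1+t)$ and using $z = t/(1+t)^2$ turns the cubic into
\[
\hat F = x\bigl(1 + \hat F + z\hat F^2\bigr) + z\hat F^3,
\]
and by construction the coefficient of $x^n$ in $\hat F$ is precisely $f_n(t)/(1+t)^{n-1}$ expressed in $z$. Since the right-hand side is a polynomial in $x, z, \hat F$ with \emph{nonnegative} coefficients and $\hat F|_{x=0}=0$, the standard bootstrap (iterate the map $Y \mapsto x(1 + Y + zY^2) + zY^3$ starting from $Y=0$; each iterate has nonnegative coefficients and they stabilize coefficientwise to the unique solution) shows that $\hat F$ has nonnegative coefficients in $x$ and $z$. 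Reading off the coefficient of $x^n$ gives $\gamma_{n,k} \ge 0$, as desired.

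I expect the main obstacle to be the bookkeeping in the second step rather than the final positivity argument: the indecomposable pieces $f^+_n, f^-_n$ are not individually palindromic and the $\ominus$-junctions contribute bare factors of $t$ (which are not gamma-positive), so gamma-positivity genuinely cannot be checked factor-by-factor and emerges only after assembling the full functional equation. The conceptual crux is recognizing that the transform $z = t/(1+t)^2$ simultaneously linearizes gamma-positivity into ordinary nonnegativity and carries the cubic into a manifestly positive algebraic equation. An alternative to the generating-function route would be a direct induction on $n$ using multiplicativity of gamma-positivity under products (for $\oplus$-chains), strengthened by a refined statistic recording the ascent/descent status of the boundary letters (to control $\ominus$-chains); I would fall back on this only if the explicit elimination proved unwieldy.
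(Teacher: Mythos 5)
Your proof is correct, but it follows a genuinely different route from the paper's. The paper does not actually prove this proposition --- it is quoted from Fu, Lin and Zeng \cite{F.L.Z.} --- and the closest in-paper argument is the tree machinery of Section~\ref{sec:partial settlement}: specialized to $H(2)$ (where every internal node of the substitution decomposition tree is labeled $12$ or $21$), Lemma~\ref{over class} partitions $H(2)\cap S_n$ into equivalence classes under the involutions $\phi_i$ that swap $12\leftrightarrow 21$ along odd-length binary right chains, and each class contributes a single basis element $t^k(1+t)^{n-1-2k}$, so that $\gamma_{n,k}$ counts equivalence classes (minimal representatives). Your argument is instead algebraic, and its details check out: the system for sum-/skew-indecomposable separable permutations is set up correctly (maximal $\oplus$- and $\ominus$-decompositions are unique, $\des$ is additive, and each $\ominus$-junction contributes a factor $t$); the elimination is right, since $P=F/(1+F)$ and $M=x+PF$ give $F(1-tF^2)=x(1+F)(1+tF)$, which is your cubic because $(1+F)(1+tF)=1+(1+t)F+tF^2$; and the substitution $z=t/(1+t)^2$ turns it into the manifestly positive fixed-point equation $\hat F=x(1+\hat F+z\hat F^2)+z\hat F^3$, whose unique solution has coefficients in $\mathbb{N}[z]$ by the iteration argument, after which palindromicity (via reversal, which preserves $H(2)$) plus linear independence of the powers of $t/(1+t)^2$ identifies those coefficients with the $\gamma_{n,k}$. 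Interestingly, your cubic is precisely the univariate, separable specialization ($S\equiv 0$, $st\mapsto t$) of the system the paper derives in Section~\ref{sec:generating function}, though the paper never exploits that system for positivity. As for what each approach buys: the tree-involution proof yields an explicit combinatorial interpretation of $\gamma_{n,k}$ as a number of orbits, which is exactly what the paper's broader program (a combinatorial proof of Gessel's conjecture) requires; your generating-function proof is shorter and self-contained, but establishes nonnegativity without attaching a direct combinatorial meaning to the coefficients.
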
 

By Observation~\ref{respect des},
if $\pi \in H(2)$ then $\des(\pi)=\ides(\pi)$. 
Hence, one can conclude the following restricted version of Gessel's conjecture for the set of separable permutations.

\begin{thm} \label{H(2)}
For each $n$ there exist non-negative integers $\gamma_{n,k}$ $(0 \leq k \leq \lfloor (n-1)/2 \rfloor)$ such that:
\[
\sum_{\pi \in H(2) \cap S_n}{s^{\des(\pi)}t^{\ides(\pi)}=\sum_{\pi \in H(2) \cap S_n}{(st)^{\des(\pi)}=\sum_{k=0}^{\lfloor (n-1)/2 \rfloor}}{ \gamma_{n,k} (st)^k (1+st)^{n-1-2k}}}.
\]
\end{thm}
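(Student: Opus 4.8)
The plan is to reduce the bivariate statement to the univariate gamma-positivity result of Fu, Lin and Zeng by exploiting the identity, recorded just above, that $\des(\pi) = \ides(\pi)$ for every $\pi \in H(2)$. This single fact collapses the two variables $s$ and $t$ into the one monomial $st$, after which a purely formal substitution does all the remaining work. No new positivity input is needed beyond the proposition already cited.

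First I would justify the leftmost equality. Since $H(2) = \langle \{1,12,21\}\rangle$ is built from the singleton by repeated inflation using only $12$ and $21$, and since $\des(12) = \ides(12) = 0$ while $\des(21) = \ides(21) = 1$, the additivity in Observation~\ref{respect des} yields, by induction on the substitution decomposition tree, that $\des(\pi) = \ides(\pi)$ for all $\pi \in H(2)$. Consequently $s^{\des(\pi)} t^{\ides(\pi)} = (st)^{\des(\pi)}$ for each such $\pi$, and summing over $H(2) \cap S_n$ gives the first equality. For the second equality I would set $u := st$ and read the Fu--Lin--Zeng proposition as an identity of polynomials in the single indeterminate $u$:
\[
\sum_{\pi \in H(2) \cap S_n} u^{\des(\pi)} = \sum_{k=0}^{\lfloor (n-1)/2 \rfloor} \gamma_{n,k}\, u^k (1+u)^{n-1-2k},
\]
with the very same nonnegative integers $\gamma_{n,k}$. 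Specializing $u = st$ produces the rightmost expression and finishes the argument.

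I expect no genuine obstacle here: all the real difficulty has already been absorbed into the univariate proposition, and the only point to verify is that replacing the variable by the monomial $st$ is legitimate. It is, because an equality of univariate polynomials holds identically and therefore survives the specialization $u \mapsto st$; the integrality and nonnegativity of the $\gamma_{n,k}$ are preserved verbatim, so the resulting coefficients are exactly the ones claimed. The one place demanding a little care is the inductive verification that $\des = \ides$ on all of $H(2)$, which is why I would spell out the base cases $12$ and $21$ explicitly before invoking Observation~\ref{respect des}.
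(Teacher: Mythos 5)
Your proposal is correct and follows essentially the same route as the paper: the paper likewise deduces $\des(\pi)=\ides(\pi)$ on $H(2)$ from Observation~\ref{respect des} and then substitutes $st$ for the single variable in the Fu--Lin--Zeng univariate gamma-positivity result. Your write-up merely makes explicit (via induction on the substitution decomposition tree, with base cases $12$ and $21$) what the paper leaves implicit.
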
 


In order to extend Theorem \ref{H(2)} further, let us introduce some more definitions.

\begin{defn}\label{def:BRC}
Let $T$ be a tree with all internal nodes labeled by simple permutations. 
A {\em binary right chain} (BRC) is a maximal nonempty chain composed of consecutive right descendants, all of which are from the set $\{12,21\}$.
The {\em length} of a BRC is the number of nodes in it.
Denote by $r_{odd}(T)$ the number of BRC of odd length in $T$.
\end{defn}

\begin{exa}
The tree $T_{\sigma}$ in Figure \ref{fig:one tree} has $4$ BRC, and $r_{odd}(T_{\sigma})=3$.
\end{exa}

\begin{figure}
\begin{center}
\includegraphics[scale=0.8]{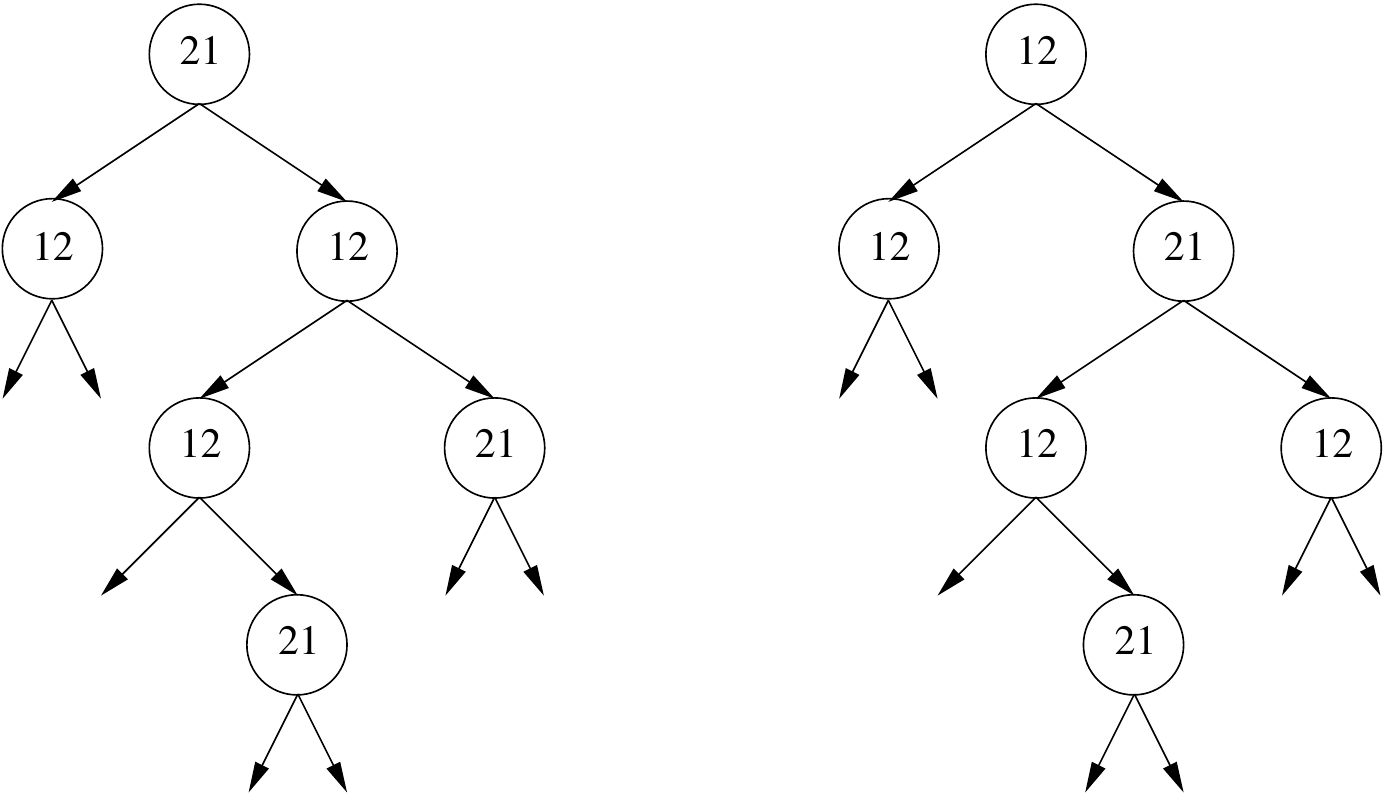}
\caption{Left: the tree $T$. Right: The tree $\phi_1(T)$.}
\label{fig:two trees}
\end{center}
\end{figure}

\begin{defn}\label{def:G-tree}
A tree $T$ is called a {\em G-tree} if 
it satisfies:
\begin{enumerate}
\item
Each leaf is labeled by $1$.
\item
Each internal node is labeled by a simple permutation ($\ne 1$), and the number of its children is equal to the length of the permutation.
\item
The labels in each BRC alternate between 12 and 21.
\end{enumerate}
Denote by $\mathcal{GT}_n$ the set of all G-trees with $n$ leaves. 
\end{defn}



\begin{lem}\label{bijection perms to trees}
The map $f_n : S_n \to \mathcal{GT}_n$ sending each permutation $\sigma$ to its substitution decomposition tree $T_{\sigma}$, as in Definition~\ref{tree of perm}, is a bijection.
\end{lem}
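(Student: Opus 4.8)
The plan is to exhibit an explicit two-sided inverse, rather than to count both sets and argue injectivity separately. The map $f_n$ is already well-defined by Proposition~\ref{t.substitution} and Definition~\ref{tree of perm}: the substitution decomposition tree is constructed by repeated inflation, and its root is labeled by a simple permutation (which may be $12$ or $21$). I would first verify that $f_n(\sigma) = T_\sigma$ actually lands in $\mathcal{GT}_n$ by checking the three defining properties of a G-tree. Properties (1) and (2) are immediate from the recursive construction: leaves correspond to the base case $\sigma = 1 \in S_1$, and each internal node is labeled by a simple permutation $\pi \in S_k$ with exactly $k$ children $\alpha_1,\dots,\alpha_k$. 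The substance is property~(3), the alternation of $12$ and $21$ along each binary right chain.

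Here is the key point I would isolate. Along a BRC, consecutive right descendants are all labeled by elements of $\{12,21\}$. Suppose two consecutive nodes in such a chain carried the \emph{same} label, say both $12$: the parent is $12[\alpha_1,\alpha_2]$ with $\alpha_2 = 12[\beta_1,\beta_2]$ (the right child). Then $\alpha_2 = \beta_1 \oplus \beta_2$ is sum-decomposable, contradicting the uniqueness clause of Proposition~\ref{t.substitution}, which \emph{requires} $\alpha_2$ to be sum-indecomposable when $\pi = 12$. The identical argument with $21$ and skew-indecomposability rules out two consecutive $21$'s. Hence within the set $\{12,21\}$ the labels must strictly alternate, giving property~(3). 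This is exactly where the extra requirements recorded in Remark~\ref{right chain condition in permutations} do their work, so I expect this to be the main obstacle: making precise that a repeated label along the right spine is precisely a failure of the indecomposability condition that Proposition~\ref{t.substitution} enforces.

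For surjectivity and injectivity together, I would construct the inverse map $g_n : \mathcal{GT}_n \to S_n$ by reading off a permutation from a G-tree recursively: a single leaf maps to $1 \in S_1$, and a tree with root $\pi \in S_k$ and child subtrees $T_1,\dots,T_k$ maps to $\pi[g(T_1),\dots,g(T_k)]$. This is well-defined because property~(2) guarantees that the number of children matches the length of $\pi$, so the inflation is legitimate. That $g_n \circ f_n = \mathrm{id}$ is immediate from the definitions, since $g_n$ simply undoes the recursive construction of $T_\sigma$. For $f_n \circ g_n = \mathrm{id}$, I would argue by induction on the number of leaves: given a G-tree $T$ with root $\pi$ and subtrees $T_i$, the permutation $\sigma = g_n(T) = \pi[g(T_1),\dots,g(T_k)]$ has $\pi$ as its unique simple root (by Proposition~\ref{t.substitution}), and property~(3) of G-trees guarantees that each $g(T_i)$ satisfies the indecomposability condition required by the uniqueness clause whenever $\pi \in \{12,21\}$. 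Thus the decomposition recovered by $f_n$ coincides with the original $(\pi, T_1,\dots,T_k)$, and by induction $f_n(\sigma) = T$.

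The verification that property~(3) of the G-tree forces the required indecomposability of $g(T_i)$ is the mirror image of the obstacle above, and it too rests entirely on Remark~\ref{right chain condition in permutations}. Once both directions of this correspondence between "repeated label on the right spine" and "failure of indecomposability" are nailed down, the two compositions reduce to routine induction, and the bijection follows.
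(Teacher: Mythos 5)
Your proposal is correct and takes essentially the same route as the paper, whose entire proof is the one-line observation that the bijection follows from Proposition~\ref{t.substitution}, with condition (3) of Definition~\ref{def:G-tree} reflecting the indecomposability restrictions for $\pi \in \{12,21\}$. Your write-up simply makes explicit the details (the inverse map by recursive inflation, and the equivalence between ``repeated label on the right spine'' and ``failure of sum/skew-indecomposability'') that the paper leaves to the reader.
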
 

\begin{proof}
Follows immediately from Proposition~\ref{t.substitution}.
The last condition in Definition~\ref{def:G-tree} reflects the extra restrictions for the cases $\pi \in \{12, 21\}$ in Proposition~\ref{t.substitution}.
\end{proof}


Let $T=T_{\pi}$ be a G-tree, and let $\{C_i \mid 1 \leq i \leq r_{odd}(T)\}$ be the set of all BRC of odd length in $T$. For each $i$, let $\phi_i(T)$ be the tree obtained from $T$ by switching $12$ and $21$ in each of the nodes of $C_i$. (A similar action was introduced in \cite{F.L.Z.} for univariate polynomials.)
Clearly, each operator $\phi_i$ is an involution, and the various $\phi_i$ commute.
By Observation~\ref{respect des}, each $\phi_i$ changes both $\des(\pi)$ and $\ides(\pi)$ by $\pm1$.

\begin{exa}
Consider $\pi = 6713254$. The corresponding tree $T=T_{\pi}$ appears on the left side of Figure \ref{fig:two trees}, and has $r_{odd}(T)=2$. If 
$C_1$ is the unique BRC of length $3$ in $T$,
then $\phi_1(T)$ is the tree on the right side of the figure. The permutation corresponding to $\phi_1(T)$ is $1257634$. Note that $\phi_1$ decreased both $\des(\pi)$ and $\ides(\pi)$ by $1$. 
\end{exa}

Let $T = T_{\pi}$ be a G-tree, and let $l_1,\ldots,l_k$ be the labels of nodes in $T$ that belong to the set $\Simp_4 = \{2413,3142\}$. 
Define $\psi_j(T)$ $\textup{(}1 \leq j \leq k\textup{)}$ to be the tree obtained from $T$ by switching the label $l_j$ from $2413$ to $3142$, or vice versa. 
Again, it is easy to see that the $\psi_j$ are commuting involutions, and each $\psi_j$ commutes with each $\phi_i$.
Switching from $2413$ to $3142$ increases $\des(\pi)$ by $1$ while decreasing $\ides(\pi)$ by $1$.

\begin{rem}
Each of the $6$ simple permutations $\pi \in \Simp_5$ has $\des(\pi) = \ides(\pi) = 2$, and we don't need to define involutions for them.
\end{rem}

\begin{defn}
For any two G-trees $T_1$ and $T_2$, write $T_1 \sim T_2$ if $T_2$ can be obtained from $T_1$ by a sequence of applications of the involutions $\phi_i$ and $\psi_j$.
\end{defn}

Clearly $\sim$ is an equivalence relation, partitioning the set $\mathcal{GT}_n$ (equivalently, the group $S_n$) into equivalence classes. 

\begin{defn}
For each equivalence class in $\mathcal{GT}_n$, let $T_0$ be the unique tree in this class in which each odd BRC begins with $12$ and each node representing a simple permutation of length $4$ is labeled $2413$. 
The corresponding permutation $\pi_0$ has the minimal number of descents in its class. 
The tree $T_0$ and the permutation $\pi_0$ are called the {\em minimal representatives} of their equivalence class.
\end{defn}




 



\begin{lem}\label{over class}
Let $A$ be an equivalence class of permutations in $H(5)\cap S_n$.
There exist nonnegative integers $i$ and $j$ such that
\[
\sum_{\sigma \in A} s^{\des(\sigma)} t^{\ides(\sigma)} 
= (st)^{i} (s+t)^{j} (1+st)^{n-1-2i-j}.
\]
\end{lem}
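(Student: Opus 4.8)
The plan is to analyze a single equivalence class $A$ under the relation $\sim$ by tracking exactly how the commuting involutions $\phi_i$ and $\psi_j$ act on the statistics $\des$ and $\ides$, and then to sum the monomial $s^{\des(\sigma)}t^{\ides(\sigma)}$ over the orbit directly. Start from the minimal representative $\pi_0$ with tree $T_0$, and let $a = r_{odd}(T_0)$ be the number of odd-length BRC and $b$ the number of nodes labeled by a permutation in $\Simp_4 = \{2413,3142\}$. By the bijection of Lemma~\ref{bijection perms to trees}, the class $A$ is precisely the set of trees obtained by independently choosing, for each odd BRC $C_i$, whether to apply $\phi_i$, and for each $\Simp_4$-node, whether to apply $\psi_j$. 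Thus $A$ is in bijection with $\{0,1\}^a \times \{0,1\}^b$, and every element of $A$ is reached exactly once, since distinct subsets of involutions give distinct trees (the involutions commute and each is a genuine involution, so the group they generate is $\mathbb{Z}_2^{a+b}$ acting freely on the orbit).

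Next I would record the statistic changes. By Observation~\ref{respect des} and the remarks following it, applying a single $\phi_i$ changes $\des(\pi)$ by $\pm 1$ and changes $\ides(\pi)$ by the same $\pm 1$, while switching $2413 \to 3142$ via a $\psi_j$ increases $\des$ by $1$ and decreases $\ides$ by $1$. Because $T_0$ is minimal, at $\pi_0$ each odd BRC is poised to be switched \emph{up} (from $12$-start to $21$-start), increasing both $\des$ and $\ides$ by $1$, and each $\Simp_4$-node is labeled $2413$, so switching to $3142$ increases $\des$ and decreases $\ides$. Hence if we apply $\phi_i$ for $i$ in a subset of size $p \le a$ and $\psi_j$ for $j$ in a subset of size $q \le b$, the resulting permutation $\sigma$ satisfies
\[
\des(\sigma) = \des(\pi_0) + p + q,
\qquad
\ides(\sigma) = \ides(\pi_0) + p - q.
\]
Summing the monomial over the class therefore factors as a product over the independent binary choices:
\[
\sum_{\sigma \in A} s^{\des(\sigma)} t^{\ides(\sigma)}
= s^{\des(\pi_0)} t^{\ides(\pi_0)} (1 + st)^{a} (s + t)^{b}.
\]
Setting $j = b$, it remains to identify $\des(\pi_0)$ and $\ides(\pi_0)$ and to show the whole expression collapses into the claimed form $(st)^i (s+t)^j (1+st)^{n-1-2i-j}$.

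Finally I would pin down the exponents. Since $\pi_0$ is the minimal representative, the darga identity (\ref{eq.pal}) forces the class polynomial to be bivariate palindromic of darga $n-1$, and the factor $(1+st)^a(s+t)^b$ already accounts for darga $2a + b$; the leftover monomial $s^{\des(\pi_0)}t^{\ides(\pi_0)}$ must then be a power of $st$, say $(st)^i$ with $i = \des(\pi_0) = \ides(\pi_0)$, and consistency of degrees gives $n - 1 = 2i + 2a + b$, so that $a = n - 1 - 2i - j$ with $j = b$. I expect the main obstacle to be verifying cleanly that at the minimal representative the two statistics of $\pi_0$ are \emph{equal} (so the residual monomial is genuinely a power of $st$ rather than an off-diagonal term), and that the sign of each involution's effect is uniformly the ``increasing'' direction starting from $T_0$; once the minimality of $T_0$ is used to fix all these signs, the factorization and the reconciliation of exponents via palindromicity are routine.
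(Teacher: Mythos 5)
Your overall strategy (walk the $\mathbb{Z}_2^{a+b}$-orbit of the commuting involutions starting from the minimal representative, and factor the sum) is the same as the paper's, but two steps go wrong. First, the bookkeeping: at the minimal representative every $\Simp_4$-node is labeled $2413$, and $\des(2413)=1$ while $\ides(2413)=2$, so by Observation~\ref{respect des} one has $\ides(\pi_0)=\des(\pi_0)+b$, \emph{not} $\des(\pi_0)=\ides(\pi_0)$; the equality you flag as ``the main obstacle to verify'' is simply false whenever $b>0$. Relatedly, your factorization is off by $t^{b}$: summing over the $q$ applied $\psi_j$'s gives $\sum_q \binom{b}{q}(st^{-1})^q=(1+st^{-1})^b=(s+t)^b/t^b$, so the correct identity is
\[
\sum_{\sigma\in A}s^{\des(\sigma)}t^{\ides(\sigma)}
= s^{\des(\pi_0)}\,t^{\ides(\pi_0)-b}\,(1+st)^{a}(s+t)^{b}
= (st)^{\des(\pi_0)}(s+t)^{b}(1+st)^{a},
\]
where the last equality uses precisely $\ides(\pi_0)-b=\des(\pi_0)$. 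Your two errors happen to compensate, but as written the argument hinges on a claim that fails, and the relation $\ides(\pi_0)=\des(\pi_0)+b$ still needs the node-by-node statistic computation (via Observation~\ref{respect des}) that your proposal never performs.

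Second, and more seriously, the final step --- identifying the exponent of $(1+st)$ as $n-1-2i-j$, i.e.\ proving $2\des(\pi_0)+b+a=n-1$ --- cannot be obtained by invoking Equation~(\ref{eq.pal}). That identity is proved for the full polynomial $A_n(s,t)$ via the reversal bijection on all of $S_n$; a single equivalence class $A$ is not closed under reversal (reversal mirrors the tree and swaps $2413$ with $3142$, generally landing in a different class), so palindromicity of darga $n-1$ for the class polynomial is exactly what is in question, not an available hypothesis --- the appeal is circular. The paper closes this gap with a counting argument on $T_0$ that is the key missing ingredient in your proposal: letting $v_\ell$ be the number of internal nodes whose label has length $\ell\in\{2,4,5\}$, counting nodes gives $v=n+v_2+v_4+v_5$ while counting children gives $v-1=2v_2+4v_4+5v_5$, hence $n-1=v_2+3v_4+4v_5$; since BRCs alternate and each odd BRC of $T_0$ starts with $12$, one has $v_2=2d_2+r$ where $d_2$ is the number of $21$-labels and $r=a$, yielding $n-1=2(d_2+v_4+2v_5)+v_4+r=2i+j+a$ with $i=\des(\pi_0)$ and $j=b=v_4$. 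Without this tree count there is no way to pin down the exponent of $(1+st)$, so the proof as proposed does not go through.
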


\begin{proof}
Let $\pi_0$ be the minimal representative of $A$, and let $T_0 = T_{\pi_0}$. 
For $i \in \{2,4,5\}$, let $v_i$ be the number of nodes of $T_0$ having labels of length $i$.  
Since $T_0$ has exactly $n$ leaves, its total number of nodes (including leaves) is 
\[
v = n + v_2 + v_4 + v_5.
\]
%
On the other hand, counting the children of each node gives
\[
v - 1 = \sum_{i\in \{2, 4, 5\}} i v_i.
\]
It follows that
\[
n-1 = v_2 + 3v_4 + 4v_5.
\]


Let $r = r_{odd}(T_0)$ be the number of odd BRC in $T_0$, and 
let $d_2$ be the number of nodes labeled $21$.
By definition, each BRC alternates between $12$ and $21$ and each odd BRC in $T_0$ starts with $12$. 
It follows that
\[
v_2 = 2d_2 + r,
\]
so that
\begin{equation}\label{eq:number_of_nodes}
n-1 = r + 2d_2 + 3v_4 + 4v_5.
\end{equation}

Now recall that 
$\des(12)=\ides(21)=0$,
$\des(21)=\ides(21)=1$,
$\des(2413)=1$, $\ides(2413)=2$, 
$\des(3142)=2$, $\ides(3142)=1$, 
and for each simple permutation $\sigma$ of length $5$ we have $\des(\sigma)=\ides(\sigma)=2$. 
it follows from Observation~\ref{respect des} that
\[
\des(\pi_0)=d_2+v_4+2v_5
\]
and
\[
\ides(\pi_0)=d_2+2v_4+2v_5,
\]
so that the bivariate monomial corresponding to $\pi_0$ is
\[
s^{\des(\pi_0)} t^{\ides(\pi_0)}
= (st)^{d_2 + v_4 + 2v_5} t^{v_4}.
\]

Consider now the whole equivalence class $A$, whose elements are obtained from $\pi_0$ by applications of the commuting involutions $\phi_1, \ldots, \phi_r$ and $\psi_1, \ldots, \psi_k$, 
where $r = r_{odd}(T_0)$ is the number of BRC in $T_0$ and $k = v_4$.
Each application of $\phi_i$ multiplies the monomial by $st$, and each application of $\psi_j$ multiplies it by $st^{-1}$. It follows that



\[
\begin{aligned}
\sum_{\sigma \in A}
s^{\des(\sigma)} t^{\ides(\sigma)}
&= (st)^{d_2+v_4+2v_5} t^{v_4} (1+st)^{r} (1 + st^{-1})^{v_4} \\
&= (st)^{d_2+v_4+2v_5} (s+t)^{v_4} (1+st)^{r}.
\end{aligned}
\]
Denoting $i = d_2+v_4+2v_5$ and $j = v_4$ will complete the proof, once we show that
\[
2(d_2+v_4+2v_5) + v_4 +r = n-1;
\]
but this follows immediately from equation~\eqref{eq:number_of_nodes} above.






\end{proof}

Lemma \ref{over class} immediately implies one of the main results of this paper:  

\begin{thm}\label{main theorem}
For each $n \ge 1$, the polynomial
\[
\sum_{\sigma \in H(5) \cap S_n} s^{des(\sigma)} t^{ides(\sigma)}
\]
is gamma-positive.
\end{thm}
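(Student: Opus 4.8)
The plan is to derive Theorem~\ref{main theorem} as an immediate consequence of Lemma~\ref{over class}, which has already done all the hard work. The key observation is that the involutions $\phi_i$ and $\psi_j$ generate an equivalence relation $\sim$ that partitions $\mathcal{GT}_n$, and hence (via the bijection $f_n$ of Lemma~\ref{bijection perms to trees}) partitions $S_n$, into equivalence classes. Since every permutation in $H(5) \cap S_n$ has a substitution decomposition tree whose internal nodes are labeled by simple permutations of length at most $5$, the relation $\sim$ restricts to a partition of $H(5) \cap S_n$ into the equivalence classes $A$ handled by Lemma~\ref{over class}.

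First I would observe that $H(5) \cap S_n$ is a disjoint union of equivalence classes, say $H(5) \cap S_n = \bigsqcup_{m} A_m$. This requires noting that the operators $\phi_i$ and $\psi_j$ preserve the set of simple permutations appearing as node labels (they only swap $12 \leftrightarrow 21$ or $2413 \leftrightarrow 3142$, all of which lie in $\Simp_{\le 5}$), so applying them keeps a tree inside $\mathcal{GT}_n$ with labels in $\Simp_{\le 5}$, i.e.\ the equivalence relation does not leave $H(5)$. Then I would simply sum the identity of Lemma~\ref{over class} over all classes:
\[
\sum_{\sigma \in H(5) \cap S_n} s^{\des(\sigma)} t^{\ides(\sigma)}
= \sum_{m} \sum_{\sigma \in A_m} s^{\des(\sigma)} t^{\ides(\sigma)}
= \sum_{m} (st)^{i_m}(s+t)^{j_m}(1+st)^{n-1-2i_m-j_m},
\]
where each pair $(i_m, j_m)$ of nonnegative integers comes from Lemma~\ref{over class}.

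Finally I would note that each summand on the right is exactly an element of the bivariate gamma basis $\{(st)^i(s+t)^j(1+st)^{n-1-j-2i}\}$, with coefficient $1$; collecting like terms produces nonnegative integer coefficients $\gamma_{n,i,j}$ (each counting the number of classes $A_m$ with $(i_m,j_m) = (i,j)$). This is precisely the statement of gamma-positivity.

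I do not expect any genuine obstacle here, since the substantive content is entirely contained in Lemma~\ref{over class}; the only point requiring a word of care is the closure claim, namely that $\sim$ restricts to $H(5) \cap S_n$ so that the classes tile it. This is what justifies summing Lemma~\ref{over class} over all classes rather than over some larger or ill-defined index set, and it follows because the label-swaps performed by $\phi_i$ and $\psi_j$ never introduce a simple permutation of length greater than $5$.
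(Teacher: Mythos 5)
Your proposal is correct and matches the paper's own argument, which simply states that Theorem~\ref{main theorem} follows immediately from Lemma~\ref{over class} by summing over equivalence classes. Your explicit verification that the involutions $\phi_i$ and $\psi_j$ preserve membership in $H(5) \cap S_n$ (so that the classes genuinely tile this set) is a worthwhile detail the paper leaves implicit, but it is the same proof.
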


\section{Gamma-positivity for simple permutations}
\label{sec:reduction}

For each positive integer $n$, the set $\Simp_n$ of simple permutations of length $n$ is invariant under taking inverses and reverses. It follows that the bivariate polynomial 
\[
\simp_{n}(s,t) = \sum_{\sigma \in \Simp_n} s^{\des(\sigma)} t^{\ides(\sigma)}
\]
is palindromic, and can be expanded in the gamma basis. 
Conjecture~\ref{conj:simple}, presented in the Introduction, states that this polynomial is, in fact, gamma-positive.
The main result of this section is the following.

\begin{thm}
Conjecture~\ref{conj:simple} implies Gessel's conjecture (Lin's theorem), Theorem~\ref{t:Gessel}.
\end{thm}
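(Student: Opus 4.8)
The plan is to reduce the two-sided Eulerian polynomial $A_n(s,t)$ to a sum over simple permutations via the substitution decomposition tree. By Lemma~\ref{bijection perms to trees}, every $\sigma \in S_n$ corresponds uniquely to a G-tree, whose root is labeled by a simple permutation $\pi \in \Simp_k$ and whose subtrees encode the inflating permutations $\alpha_1,\ldots,\alpha_k$. The key structural input is Observation~\ref{respect des}: since $\des$ and $\ides$ are \emph{additive} under inflation, the monomial $s^{\des(\sigma)}t^{\ides(\sigma)}$ factors as the product of the root's contribution $s^{\des(\pi)}t^{\ides(\pi)}$ times the contributions of the $\alpha_i$. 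This multiplicativity is exactly what we need to turn a sum over all of $S_n$ into a composition of sums indexed by the tree structure.

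First I would set up the generating-function bookkeeping. Writing $A(s,t) = \sum_{n \ge 1} A_n(s,t)\, z^n$ (with the appropriate variable $z$ marking length, or rather summing over $n$), the unique-decomposition Proposition~\ref{t.substitution} together with Observation~\ref{respect des} should yield a functional equation expressing $A$ in terms of $\simp(s,t) = \sum_k \simp_k(s,t)\,z^k$ composed with $A$ itself, reflecting that a permutation is a simple permutation inflated by arbitrary permutations. The special cases $\pi \in \{12,21\}$ require the sum-indecomposability (respectively skew-indecomposability) bookkeeping from Proposition~\ref{t.substitution}, so those two terms must be handled separately from the $\Simp_{\ge 4}$ terms to avoid overcounting. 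The upshot is a clean composition formula: $A_n(s,t)$ is a polynomial combination, with nonnegative-integer coefficients coming purely from the tree combinatorics, of the various $\simp_k(s,t)$ for $k \le n$.

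The crux is then to verify that this compositional structure \emph{preserves gamma-positivity}. The essential point is that the bivariate gamma basis is closed under the relevant product/composition operation: if each $\simp_k(s,t)$ is gamma-positive, i.e. a nonnegative combination of $(st)^i(s+t)^j(1+st)^{k-1-j-2i}$, then inflating (substituting gamma-positive pieces into a gamma-positive root) again produces a gamma-positive polynomial of the correct darga. One checks this by confirming that a product of bivariate gamma monomials, with the darga-additivity built into the inflation (darga $n-1$ splits as $(k-1) + \sum(n_i - 1)$), expands back into the gamma basis with nonnegative coefficients. The two-sided involutions $\phi_i$ (valley-hopping on BRC) and $\psi_j$ (swapping $2413 \leftrightarrow 3142$) introduced earlier already demonstrate this mechanism concretely for the $\{12,21\}$ and $\Simp_4$ nodes, packaging the root contributions into $(1+st)$ and $(s+t)$ factors respectively, exactly as in the proof of Lemma~\ref{over class}.

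The main obstacle I anticipate is the closure-under-composition step: showing that gamma-positivity of the building blocks propagates through arbitrary inflation, rather than just through the single-level $H(5)$ case already settled in Lemma~\ref{over class}. There the root labels were confined to $\{12,21\} \cup \Simp_4 \cup \Simp_5$, for which explicit darga-$i$ involutions were available; for a general simple root of length $k \ge 6$ one cannot exhibit such involutions and must instead argue abstractly that a gamma-positive $\simp_k$ inflated by gamma-positive subtrees stays gamma-positive. I would isolate this as a lemma asserting that the set of bivariate gamma-positive palindromic polynomials is closed under the inflation-induced product, proving it by a direct nonnegative re-expansion of $(st)^{i}(s+t)^{j}(1+st)^{\ell}$-type products in the gamma basis. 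Granting that lemma and inducting on the tree height (or on $n$), Conjecture~\ref{conj:simple} feeds gamma-positivity of each $\simp_k$ into the composition and yields gamma-positivity of $A_n(s,t)$, which is precisely Theorem~\ref{t:Gessel}.
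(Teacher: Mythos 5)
Your proposal is correct and is essentially the paper's own argument: partition $S_n$ according to the shape of the substitution decomposition tree, use Observation~\ref{respect des} to factor each class's polynomial into $\simp_\ell(s,t)$ factors at internal nodes of size $\ell \ge 4$ times gamma-monomials $(st)^k(1+st)$ (resp.\ $2(st)^k$) contributed by the odd (resp.\ even) alternating $12/21$ chains, and finish with the trivial fact that products of bivariate gamma monomials are gamma monomials of the summed darga. The only caution concerns your generating-function intermediary: the indecomposable series $I^{\pm}$ are not themselves palindromic (e.g.\ $\sum_{\pi \in I_3^+} s^{\des(\pi)} t^{\ides(\pi)} = 2st+(st)^2$), so one cannot induct level-by-level on the functional equation and must instead group whole BRCs as single units --- which is exactly the packaging you invoke from Lemma~\ref{over class}, and exactly what the paper's simplified-tree equivalence classes accomplish.
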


\begin{proof}
For each permutation $\sigma$ of length $n \ge 2$, consider its substitution decomposition tree $T_{\sigma}$. Each internal node of $T_{\pi}$ is labeled by some simple permutation $\pi$ of length $\ell = \ell(\pi) \ge 2$. Replace $\pi$ by $\ell$, to obtain a {\em simplified tree} $T'_{\sigma}$ (with internal nodes labeled by numbers).
For permutations $\sigma_1, \sigma_2 \in S_n$ define $\sigma_1 \sim \sigma_2$ if $T'_{\sigma_1} = T'_{\sigma_2}$.
Clearly $\sim$ is an equivalence relation on $S_n$, with each equivalence class corresponding to a unique simplified tree $T'$. Denote such a class by $A(T')$.

Define a BRC of $T'$ (in analogy to Definition~\ref{def:BRC}) to be a maximal nonempty chain of consecutive right descendants, all labeled $2$.
How can we recover a permutation $\sigma \in A(T')$ from the tree $T'$?
Each internal node, labeled by a number $\ell$, can be relabeled by any simple permutation of length $\ell$,
with the single restriction that the labels in each BRC must alternate between $12$ and $21$, starting with either of them.
It thus follows, by Observation~\ref{respect des}, that for each simplified tree $T'$, the polynomial 
\[
\sum_{\sigma \in A(T')} s^{\des(\sigma)} t^{\ides(\sigma)}
\]
is a product of factors, as follows: 
\begin{itemize}
\item 
Each internal node with label $\ell \geq 4$ contributes a factor $\simp_\ell(s,t)$. 	
\item
Each BRC of even length $2k$  contributes a factor $2(st)^k$.
\item
Each BRC of odd length $2k+1$  contributes a factor $(st)^k (1+st)$.
\end{itemize}

By Conjecture~\ref{conj:simple}, all those factors are gamma-positive, and so is their product. Summing over all equivalence classes in $S_n$ completes the proof.
\end{proof}

It is clear from the arguments above that a {\em combinatorial} proof of Conjecture~\ref{conj:simple} will immediately yield a combinatorial proof of Theorem~\ref{t:Gessel}.
In fact, the preceding section contains such a combinatorial proof assuming there are only labels $\ell \le 5$,
using $\simp_4(s,t) = st (s+t)$ and $\simp_5(s,t) = 6(st)^2$.
We were unable to extend the combinatorial arguments to length $6$, although the corresponding polynomial is indeed gamma-positive:
\[
\simp_6(s,t) = st(s+t)^2(1+st)+5(st)^2(1+st)+14(st)^2(s+t).
\]
In fact, 
Conjecture~\ref{conj:simple} has been verified by computer for all $n \le 12$.
\section{The bi-Eulerian polynomial for simple permutations}\label{sec:generating function}

In \cite{AAK}, the ordinary generating function for the {\em number} of simple permutations was shown to be very close to the functional inverse of the corresponding generating function for all permutations. In this section we refine this result by considering also the parameters $\des$ and $\ides$, thus obtaining a formula for $\simp_n(s,t)$.

\medskip

Recall from Definition \ref{sum-indecomposable} the notions of sum-indecomposable and skew-indecomposable permutations.

\begin{defn}
For each positive integer $n$, denote by $I_n^+$ (respectively, $I_n^-$) the set of all sum-indecomposable (respectively, skew-indecomposable) permutations in $S_n$.
\end{defn}

\begin{defn}
Let 

\begin{align*}
F(x,s,t) := \sum_{n=1}^{\infty} 
\left( \sum_{\pi \in S_n} s^{\des(\pi)} t^{\ides(\pi)} \right) x^n, \\
I^+(x,s,t) := \sum_{n=1}^{\infty} 
\left( \sum_{\pi \in I^+_n} s^{\des(\pi)} t^{\ides(\pi)} \right) x^n, \\
I^-(x,s,t) := \sum_{n=1}^{\infty} 
\left( \sum_{\pi \in I^-_n} s^{\des(\pi)} t^{\ides(\pi)} \right) x^n, \\
S(x,s,t) := \sum_{n=4}^{\infty} 
\left( \sum_{\pi \in \Simp_n} s^{\des(\pi)} t^{\ides(\pi)} \right) x^n.
\end{align*}

\end{defn}

Note that the summation in the definition of $S(x,s,t)$ is only over $n \ge 4$. We want to find relations between these generating functions.

From now on, we consider $F(x,s,t)$ etc.\ as formal power series in $x$, with coefficients in the field of rational functions $\mathbb{Q}(s,t)$. We therefore use the short notation $F(x)$, or even $F$.
For example, the composition $S \circ F$ means that $F$ is substituted as the $x$ variable of $S(x,s,t)$.
By Proposition~\ref{t.substitution} and Observation~\ref{respect des},

\begin{align*}
F 
&= x + I^+ F + st I^- F + \sum_{n=4}^{\infty} \simp_n F^n \\
&= x + I^+ F + st I^- F + S \circ F 
\end{align*}
and similarly
\[
I^+ = x + st I^- F + S \circ F
\]
and
\[
I^- = x + I^+ F + S \circ F.
\]
Rearranging, we have
\begin{align*}
F I^+ + stF I^- + (S \circ F + x) &= F \\
-I^+ + stF I^- + (S \circ F + x) &= 0 \\
F I^+ - I^- + (S \circ F + x) &= 0
\end{align*}

This is a system of linear equations in $I^+$, $I^-$ and $S \circ F + x$. Its unique solution is

\begin{equation}\label{e.solutions}
\begin{aligned}
I^+ &= \frac{F}{1 + F} \\
I^- &= \frac{F}{1 + stF} \\
S \circ F + x &= \frac{F(1 - stF^2)}{(1 + F)(1 + stF)}
\end{aligned}
\end{equation}
Note that the reversal map $\pi \mapsto \pi'$, defined by $\pi'(i)=n-1-\pi(i)$ $(1 \le i \le n)$, is a bijection from $S_n$ onto itself 
(and also from $I_n^+$ onto $I_n^-$),
satisfying $\des(\pi') = n-1-\des(\pi)$ and $\ides(\pi') = n-1-\ides(\pi)$. Therefore:
\[
\begin{aligned}
F(x,s,t) &= \frac{1}{st} F(xst, 1/s, 1/t) \\
I^-(x,s,t) &= \frac{1}{st} I^+(xst, 1/s, 1/t).
\end{aligned}
\]
This agrees with the first two equations in \eqref{e.solutions}.
Denoting $u=F(x)$, the third equation in \eqref{e.solutions} gives an explicit expression for $S(u,s,t)$:

\[
S(u,s,t) = -F^{\langle -1 \rangle}(u) + \frac{u(1-stu^2)}{(1+u)(1+stu)}\,,
\]
where $x = F^{\langle -1 \rangle}(u)$ is the functional inverse of $u = F(x)$.
Further manipulations with partial fractions give the following.
\begin{prop}
\begin{equation}\label{eq.Sust}
S(u,s,t) = 
-F^{\langle -1 \rangle}(u) + \frac{u}{1 + stu} + \frac{u}{1 + u} -u.
\end{equation}
\end{prop}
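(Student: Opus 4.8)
The plan is to start from the third equation in the displayed system~\eqref{e.solutions}, namely the identity
\[
S \circ F + x = \frac{F(1 - stF^2)}{(1 + F)(1 + stF)},
\]
and simply perform the partial-fraction decomposition that the surrounding text promises. First I would set $u = F(x)$ and rewrite the target relation as $S(u,s,t) = -F^{\langle -1\rangle}(u) + R(u)$, where
\[
R(u) = \frac{u(1 - stu^2)}{(1+u)(1+stu)}.
\]
So the entire statement reduces to the purely algebraic rational-function identity
\[
\frac{u(1 - stu^2)}{(1+u)(1+stu)} = \frac{u}{1+stu} + \frac{u}{1+u} - u,
\]
treating $s,t$ as constants from the field $\mathbb{Q}(s,t)$ and $u$ as a formal indeterminate.

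The cleanest route is to clear denominators. Multiplying the conjectured right-hand side by $(1+u)(1+stu)$ I would obtain
\[
u(1+u) + u(1+stu) - u(1+u)(1+stu),
\]
and expand: the first two terms give $2u + u^2 + stu^2$, while the product $u(1+u)(1+stu) = u(1 + (1+st)u + stu^2) = u + (1+st)u^2 + stu^3$. Subtracting yields
\[
2u + u^2 + stu^2 - u - (1+st)u^2 - stu^3 = u - stu^3 = u(1 - stu^2),
\]
which is exactly the numerator on the left. This verifies the identity, so the decomposition in~\eqref{eq.Sust} holds. Finally I would invoke the factorization $1 - stu^2 = (1 - \sqrt{st}\,u)(1 + \sqrt{st}\,u)$ only as a sanity check on the pole structure; the honest computation above already settles everything without introducing square roots.

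Since this is really a one-line verification dressed up as a proposition, the only thing requiring care is bookkeeping rather than any genuine obstacle. The one point deserving a sentence of justification is that the substitution $u = F(x)$ and the subsequent composition with $F^{\langle -1\rangle}$ are legitimate as formal power series: one needs $F(x) = x + \cdots$ to have zero constant term and invertible linear coefficient so that the functional inverse $F^{\langle -1\rangle}$ exists in $x\,\mathbb{Q}(s,t)[[x]]$, and one needs the denominators $1+u$ and $1+stu$ to be units in $\mathbb{Q}(s,t)[[u]]$, which holds precisely because $u$ has no constant term. Granting these formal-invertibility remarks, the expected main obstacle is not conceptual at all but merely confirming that the partial-fraction constants are the ones claimed; the expansion above shows they are, completing the proof.
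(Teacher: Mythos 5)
Your proposal is correct and follows essentially the same route as the paper: the paper derives $S(u,s,t) = -F^{\langle -1 \rangle}(u) + \frac{u(1-stu^2)}{(1+u)(1+stu)}$ from the linear system \eqref{e.solutions} and then asserts that ``further manipulations with partial fractions'' yield \eqref{eq.Sust}, and your denominator-clearing computation is exactly that manipulation, carried out explicitly and correctly. Your added remarks on formal invertibility of $F$ and on $1+u$, $1+stu$ being units in $\mathbb{Q}(s,t)[[u]]$ are sound and, if anything, slightly more careful than the paper's own treatment.
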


Using the expansions
\[
S(u,s,t) = \sum_{n \ge 4} \simp_n(s,t) u^n
\]
and
\[ 
F^{\langle -1 \rangle}(u) = \sum_{n \ge 1} f_n^{\langle -1 \rangle}(s,t) u^n,
\]
we finally obtain a formula for $\simp_n(s,t)$.
\begin{cor}
\[
\simp_n(s,t) = -f_n^{\langle -1 \rangle}(s,t) + (-1)^{n-1} + (-st)^{n-1}
\qquad (n \ge 4).
\]
\end{cor}

%

\section*{Acknowledgments}
The authors thank Mathilde Bouvel and an anonymous referee for useful comments.
RMA thanks the Institute for Advanced Studies, Jerusalem, for its hospitality during part of the work on this paper.
Work of RMA was partially supported bu an MIT-Israel MISTI grant.


\begin{thebibliography}{99}

\bibitem{AA} 
M.\ H.\ Albert and  M.\ D.\ Atkinson,
{\em Simple permutations and pattern restricted permutations},
Discrete Math.\ {\bf 300} (2005) 1--15. 

\bibitem{AAK} 
M.\ H.\ Albert, M.\ D.\ Atkinson and M.\ Klazar, 
{\em The enumeration of simple permutations}, 
J.\ Integer Sequences {\bf 6} (2003), Article 03.4.4. 

\bibitem{A.S} 
M.\ D.\ Atkinson and T.\ Stitt, 
{\em Restricted permutations and the wreath product}, 
Discrete Math.\ {\bf 259} (2002) 19--36.
 
\bibitem{Branden} 
P.\ Br\"and\'en, 
{\em Actions on permutations and unimodality of descent polynomials},
European J.\ Combin.\ {\bf 29}, (2008) 514--531. 

\bibitem{B.H.V.} 
R.\ Brignall, S.\ Huczynska and V.\ Vatter, 
{\em Simple permutations and algebraic generating functions},
J.\ Combinat.\ Theory, Series A {\bf 115}, (2008), 423--441, 

\bibitem{CRS} 
L.\ Carlitz, D.\ P.\ Roselle and R.\ A.\ Scoville, 
{\em Permutations and sequences with repetitions by number of increases},
J.\ Combinat.\ Theory {\bf 1} (1966), 350--374.

\bibitem{F.S} 
D.\ Foata and M.-P.\ Sch\"utzenberger,
{\em Th\`eorie g\`eom\`etriques des polyn\^{o}mes Eul\`eriens}, 
Lecture notes in Math., Vol.\ 138, Springer-Verlag, Berlin, 1970. 

\bibitem{F.St} 
D.\ Foata and V.\ Strehl, 
{\em Rearrangements of the symmetric group and enumerative properties of the tangent and secant numbers}, 
Math.\ Z.\ {\bf 137} (1974), 257--264.
 
\bibitem{F.L.Z.} 
S.\ Fu, Z.\ Lin and J.\ Zeng, 
{\em Two new unimodal descent polynomials},
 arXiv:1507.05184v1 [math.CO]. 

\bibitem{Lin} 
Z.\ Lin, 
{\em Proof of Gessel's $\gamma$-positivity conjecture}, 
Electronic J.\ Combinat.\ {\bf 23} (2016), \#P3.15.  

\bibitem{P}
T.\ K.\ Petersen, 
{\em Two sided Eulerian Numbers via balls in boxes}, 
Math.\ Mag.\ {\bf 86} (2013) 159--176.

\bibitem{Pet Book}
T.\ K.\ Petersen, 
{\em Eulerian numbers}, 
Birkhauser, Basel, 2015. 

\bibitem{S.O} 
L.\ Shapiro, W.-J.\ Woan and S.\ Getu, 
{\em Runs, slides, and moments}, 
SIAM J.\ Algebraic Discrete Methods {\bf 4} (1983), 459--466. 

\bibitem{Vatter}
V.\ Vatter,
{\em Permutation classes},
in: Handbook of Combinatorial Enumeration (M.\ Bona,
ed.), CRC Press (2015), pp.\ 753--833.

\bibitem{V} 
M.\ Visontai, 
{\em Some remarks on the joint distribution of descents and inverse descents},
Electronic J.\ Combinat.\ {\bf 20} (2013), \#P52.

\bibitem{W} 
J.\ West, 
{\em Generating trees and the Catalan and Schr\"oder numbers}, 
Discrete Math.\ {\bf 146} (1995), 247--262. 

\bibitem{Z} 
D.\ Zeilberger, 
{\em A one-line high school proof of the unimodality of the Gaussian polynomials $\binom{n}{k}_q$ for $k<20$}, 
in: D.\ Stanton (Ed.), $q$-Series and Partitions, 
Minneapolis, MN, 1988.

\end{thebibliography}
\end{document}